\documentclass[11pt]{amsart}

\usepackage[T1]{fontenc}
\usepackage[margin=1in]{geometry}
\usepackage{lmodern}
\usepackage{microtype}
\usepackage{amsmath,amssymb,amsthm,mathtools}
\usepackage{enumitem}
\usepackage{aliascnt}
\usepackage[hidelinks]{hyperref}
\usepackage[nameinlink,capitalize,noabbrev]{cleveref}

\newcommand{\E}{\mathbb{E}}
\newcommand{\Pp}{\mathbb{P}}

\newcommand{\R}{\mathbb{R}}
\newcommand{\1}{\mathbf{1}}
\newcommand{\cM}{\mathcal{M}}
\newcommand{\cF}{\mathcal{F}}
\newcommand{\cH}{\mathcal{H}}
\newcommand{\MC}{\operatorname{MaxCut}}
\newcommand{\MB}{\operatorname{MaxBis}}
\newcommand{\cut}{\operatorname{cut}}
\newcommand{\CM}{\widehat G}
\newcommand{\dd}{\mathbf d}

\DeclareMathOperator{\Var}{Var}

\theoremstyle{plain}
\newtheorem{theorem}{Theorem}[section]
\newaliascnt{proposition}{theorem}
\newtheorem{proposition}[proposition]{Proposition}
\aliascntresetthe{proposition}
\newaliascnt{lemma}{theorem}
\newtheorem{lemma}[lemma]{Lemma}
\aliascntresetthe{lemma}
\newaliascnt{corollary}{theorem}
\newtheorem{corollary}[corollary]{Corollary}
\aliascntresetthe{corollary}

\theoremstyle{definition}
\newaliascnt{definition}{theorem}

\aliascntresetthe{definition}

\theoremstyle{remark}
\newaliascnt{remark}{theorem}

\aliascntresetthe{remark}

\numberwithin{equation}{section}

\title[Maximum cut and maximum bisection]{Maximum cut and maximum bisection in random regular graphs}
\author{P. M. Aronow and Patrick Lopatto}
\date{July 19, 2026}

\subjclass[2020]{Primary 05C80, 60C05; Secondary 05C70}
\keywords{maximum cut, maximum bisection, random regular graph, configuration model, interpolation method}

\begin{document}

\begin{abstract}
We prove that, for every fixed degree $d$ and a uniformly random simple $d$-regular graph $G_{n,d}$ on an even number $n>d$ of vertices,
\[
  \E\MC(G_{n,d})-\E\MB(G_{n,d})=o(n).
\]
In other words, requiring the two sides of a cut to have exactly the same size changes the expected optimal cut by only a sublinear number of edges. We prove this first for the configuration model by comparing cuts of each possible cardinality on an $n$-vertex graph with bisections of a related graph on $2n$ vertices. An important technical tool is Huang's interpolation theorem \cite{Huang2018}; to apply it, we establish a structural property of the change in the optimal cut when a single edge is added. This comparison, together with concentration estimates and Huang's convergence theorem for maximum bisection, shows that maximum cut and maximum bisection have the same limiting density. Conditioning the configuration model on being simple then gives the result for uniformly random simple regular graphs.
\end{abstract}

\maketitle

\section{Introduction}

For a finite undirected multigraph $G=(V,E)$ and a set $S\subseteq V$, let
\[
  \cut_G(S)=e_G(S,V\setminus S)
\]
be the number of edges with exactly one endpoint in $S$. Multiple edges are counted with multiplicity, while loops make no contribution. Define
\[
  \MC(G)=\max_{S\subseteq V}\cut_G(S)
\]
and
\[
  \MB(G)=\max_{\substack{S\subseteq V\\ |S|=\lfloor |V|/2\rfloor}}\cut_G(S).
\]
Taking complements shows that the cardinality constraint in the second display may equivalently be $|S|=\lceil |V|/2\rceil$. In particular, for even $|V|$ this is the usual exact bisection. By definition, one always has $\MB(G)\leq\MC(G)$.

For fixed $d$, let $G_{n,d}$ be uniformly distributed over the simple $d$-regular graphs on $[n]$ whenever $n$ is even and $n>d$. Bandeira asked whether the expected maximum cut and expected maximum bisection differ by a sublinear term; this is Conjecture~33 in \cite{Bandeira2026}. The question is distinct from the related conjecture of Zdeborov\'a and Boettcher \cite{ZdeborovaBoettcher2010}, which compares maximum cut with minimum bisection. Dembo, Montanari, and Sen obtained matching large-degree asymptotics for maximum cut and maximum bisection \cite{DemboMontanariSen2017}, but those asymptotics do not settle the problem at any fixed value of $d$.

Our contribution is the following theorem.

\begin{theorem}\label{thm:main}
For every fixed integer $d\geq1$, as $n\to\infty$ through even integers $n>d$,
\begin{equation*}
  \E\MC(G_{n,d})-\E\MB(G_{n,d})=o(n).
\end{equation*}
Moreover, there is a constant $b_d$ such that
\[
  \frac1n\E\MC(G_{n,d})\longrightarrow b_d,
  \qquad
  \frac1n\E\MB(G_{n,d})\longrightarrow b_d.
\]
\end{theorem}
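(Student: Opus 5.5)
The plan is to work first in the configuration model $\CM_{n,d}$ (a uniform perfect matching on $nd$ half-edges) and to transfer to $G_{n,d}$ only at the end. Since $d$ is fixed, $\Pp(\CM_{n,d}\text{ simple})$ is bounded below, so conditioning costs nothing. Both $\MC$ and $\MB$ are $d$-Lipschitz under switchings, so Azuma/McDiarmid gives exponential concentration at scale $\epsilon n$ in $\CM_{n,d}$. Conditioning on simplicity therefore changes the expectations of $\MC$ and $\MB$ by only $o(n)$ each, provided the configuration-model statements hold with $o(n)$ errors. For maximum bisection we invoke Huang's convergence theorem: $\frac1n\E\MB(\CM_{n,d})\to b_d$ for some constant $b_d$. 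It then suffices to prove
\[
\limsup_{n\to\infty}\frac1n\E\MC(\CM_{n,d})\le b_d ,
\]
because $\MB\le\MC$ gives the reverse inequality.

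To prove this bound, decompose by cut size. Let $\MC_k(G)$ be the maximum of $\cut_G(S)$ over $|S|=k$, so that $\MC=\max_k \MC_k$. By concentration and a union bound over the $n+1$ values of $k$, we get $\E\MC\le\max_k\E\MC_k+o(n)$. We now compare $\MC_k$ on $n$ vertices with a bisection of a $2n$-vertex graph. Take two independent copies of $\CM_{n,d}$, with vertex sets $V$ and $V'$. Given a set $S\subseteq V$ with $|S|=k$, put $S$ on one side in the first copy and the complement of the corresponding set on that side in the second copy. This gives a set of size $k+(n-k)=n$ in the $2n$-vertex union, with cut value $\cut(S)+\cut(V'\setminus S')$. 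Hence $2\MC_k(\CM_{n,d})\le\MB(\CM_{n,d}\sqcup\CM_{n,d})$ in the coupled sense, and in expectation
\[
2\E\MC_k(\CM_{n,d})\le \E\MB(\CM_{n,d}\sqcup \CM'_{n,d}).
\]

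The remaining step, and the main obstacle, is to show
\[
\E\MB(\CM_{n,d}\sqcup\CM'_{n,d})\le\E\MB(\CM_{2n,d})+o(n),
\]
that is, that splitting into two components does not increase the expected bisection value. This is exactly where Huang's interpolation theorem enters. One interpolates between the disjoint union and the connected $2n$-vertex configuration model by rematching half-edges step by step: at time $t$, a fraction $t$ of the edges is matched across copies. The theorem gives monotonicity (superadditivity) along the interpolation, provided the increment of the objective when a single random edge is added satisfies a structural (convexity-type) hypothesis. Concretely, we must show that the expected gain of $\MB$ from adding an edge $uv$ is a function of the Hamiltonian's overlap structure, of the form $\E[\1\{u,v\text{ separated}\}]$ under the set of optimizers. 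We must also show that, averaged over uniformly chosen endpoints, the within-copy versus across-copy comparison has the right sign. The gain equals $1$ precisely when some optimal bisection separates $u$ and $v$, so it is the indicator of a union over optimizers. The key structural claim to verify is that this event behaves like $1-\prod$ of "same-side" events, which is convex in the relevant empirical magnetizations. This positivity is what Huang's theorem requires, and I expect proving it, for a max rather than a Gibbs measure, is the hard part. I would first try a positive-temperature version with a Gibbs measure at inverse temperature $\beta$, where the increment $\log(1-(1-e^{-\beta})\Pp_\beta(\text{same side}))$ expands as a power series in overlaps. I would then apply the standard interpolation sign argument and let $\beta\to\infty$, losing $O(n/\beta)$. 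Degree defects created at intermediate times are handled by Huang's framework.

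Combining the steps gives $\E\MC(\CM_{n,d})\le\frac12\E\MB(\CM_{2n,d})+o(n)=b_dn+o(n)$. This yields both limits and the difference bound for $\CM_{n,d}$, and then, by the conditioning argument, for $G_{n,d}$.
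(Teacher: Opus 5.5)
Your plan is sound. It shares the paper's overall architecture: the fixed-cardinality decomposition with a union bound over $k$, a doubling comparison with $\MB$ on $2n$ vertices via Huang's interpolation, Huang's limit for $\MB$, and conditioning on simplicity. The difference is that you make the pointwise comparison at the opposite end of the interpolation.

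The paper interpolates the block-constrained functional $g_{\cF_k}$ with $\cF_k=\{S:|S\cap A|=k,\ |S\cap B|=n-k\}$. Its disconnected endpoint is exactly $X_k(G_A)+X_{n-k}(G_B)$, and the bound $g_{\cF_k}\le\MB$ is used only on the connected side. You instead bound $X_k(G_A)+X_{n-k}(G_B)\le\MB(G_A\sqcup G_B)$ on the disconnected side and interpolate the bisection functional itself.

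One small correction: with independent copies you must optimize the two blocks separately and use $X_{n-k}=X_k$. Placing the same $S$ in both copies only gives the identical-copy inequality, although the expectation bound you state is correct.

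Your route is valid, because the family of all $n$-subsets of $A\sqcup B$ is predetermined and \Cref{prop:fixed-family-huang} applies to it. It is also slightly more economical. The only interpolation input is $\E\MB(\CM_{n,d}\sqcup\CM'_{n,d})\le\E\MB(\CM_{2n,d})+\psi(dn)$, which is essentially the superadditivity comparison behind Huang's convergence theorem for maximum bisection. So unequal cardinalities inside the blocks never need to be handled. In exchange, the paper's formulation gives a comparison valid for arbitrary predetermined cut families.

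The step you flag as the hard part follows at once from your own description. Let $\mathcal O$ be the set of current optimizers. Then $\Delta_{uv}=1-\prod_{S\in\mathcal O}\1\{u,v\text{ lie on the same side of }S\}$. This product is the indicator of an equivalence relation with classes $C_1,\dots,C_r$. Hence $\sum_{u,v}\Delta_{uv}z_uz_v=-\sum_j\bigl(\sum_{v\in C_j}z_v\bigr)^2\le0$ whenever $\sum_v z_v=0$, which is \Cref{lem:increment}. Since also $\Delta_{uu}=0$ and $0\le\Delta_{uv}\le1$, the without-replacement correction fits within the $2/\delta$ slack of \ref{item:H2}.

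Your positive-temperature detour would also work. The kernel $q_{uv}=\Pp_\beta(u,v\text{ same side})$ is positive semidefinite, and so is each Schur power $q^{\circ r}$ in the expansion of the logarithm. Moreover, $\psi$ does not depend on $\beta$, and $\beta^{-1}\log Z_\beta$ lies within $\beta^{-1}\log\binom{2n}{n}$ of $\MB$. So you could let $\beta\to\infty$ at fixed $n$ and lose nothing. The detour is simply unnecessary.
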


The proof is based on Salez's prescribed-degree interpolation framework \cite{Salez2016}, as extended by Huang to non-additive bisection parameters \cite{Huang2018}. We use Huang's interpolation theorem as a black box and verify its two hypotheses for the constrained cut functional needed here.

\subsection{Proof sketch}
The proof has three stages. We first work in the $d$-regular configuration model, denoted by $\CM_{n,d}$. For $0\leq k\leq n$, set
\[
  X_{n,k}(G)=\max_{\substack{S\subseteq[n]\\ |S|=k}}\cut_G(S),
  \qquad
  a_{n,k}=\E X_{n,k}(\CM_{n,d}).
\]
The main finite-size estimate is
\begin{equation}\label{eq:intro-key}
  2a_{n,k}
  \leq
  \E\MB(\CM_{2n,d})
  +7\sqrt{dn\log(1+dn)},
  \qquad 0\leq k\leq n.
\end{equation}
To obtain it, we split the $2n$ vertices into two blocks $A$ and $B$ of size $n$ and maximize the cut over sets satisfying
\[
  |S\cap A|=k,
  \qquad
  |S\cap B|=n-k.
\]
At the disconnected endpoint of the interpolation, the two blocks are independent copies of $\CM_{n,d}$, and the expected constrained optimum is $2a_{n,k}$. On the unconditioned complete-pairing side of Huang's comparison, the constrained family consists entirely of sets of size $n$. Hence its optimum is bounded above, pointwise, by $\MB(\CM_{2n,d})$.

The key local input is a structural property of maxima over fixed cut families. Let $\cF$ be any predetermined nonempty family of subsets of a finite vertex set and define
\[
  g_{\cF}(G)=\max_{S\in\cF}\cut_G(S).
\]
For vertices $u,v$, let
\[
  \Delta^G_{uv}=g_{\cF}(G+uv)-g_{\cF}(G).
\]
Two vertices have increment zero exactly when every current optimizer places them on the same side of the cut. Thus $\Delta^G$ is the $0$--$1$ separation matrix of a partition. On the subspace $\sum_v z_v=0$, its quadratic form is a negative sum of squares, and hence
\[
  \sum_{u,v}\Delta^G_{uv}z_uz_v\leq0.
\]
A weighted use of this inequality for the remaining half-edges gives Huang's local interpolation condition and therefore \eqref{eq:intro-key}.

In the second stage, each variable $X_{n,k}(\CM_{n,d})$ satisfies the same sub-Gaussian concentration bound on the scale $\sqrt n$. A union bound over the $n+1$ possible cardinalities controls the maximum deviation on the scale $\sqrt{n\log n}$ and, together with \eqref{eq:intro-key}, gives
\[
  \E\MC(\CM_{n,d})
  \leq
  \frac12\E\MB(\CM_{2n,d})
  +O_d(\sqrt{n\log n}).
\]
Huang proved that $n^{-1}\E\MB(\CM_{n,d})$ converges. The preceding inequality, together with the pointwise bound $\MC\geq\MB$, forces the normalized expected maximum cut to converge to the same limit.

Finally, for fixed $d$, the configuration model is simple with probability bounded away from zero \cite{Janson2014}. Concentration shows that conditioning on simplicity changes either expectation by at most $O_d(\sqrt n)$. Since the conditioned configuration model is uniform over simple $d$-regular graphs, this proves \Cref{thm:main}.

The new ingredient is the fixed-family increment lemma, \Cref{lem:increment}. It permits unequal cardinality constraints inside the two interpolation blocks and converts the local comparison required by Huang's theorem into a transparent partition identity.

\subsection{Organization}
\Cref{sec:configuration} introduces partial pairings and states the precise consequence of Huang's interpolation theorem used below. \Cref{sec:fixed-family} proves the increment-matrix lemma and verifies the two interpolation hypotheses for maxima over arbitrary predetermined cut families. \Cref{sec:doubling} applies this result to the two-block cardinality constraint and proves the doubling estimate \eqref{eq:intro-key}. \Cref{sec:configuration-proof} establishes concentration, invokes the maximum-bisection limit, and proves the common limiting density in the configuration model. Finally, \Cref{sec:simple} conditions on simplicity and completes the proof of \Cref{thm:main}.

\subsection{Acknowledgments}
P.L. was partially supported by NSF grant DMS-2450004. This paper was written with the assistance of large language models, which included suggesting arguments, drafting and revising the manuscript, and exploratory computational checks.

\section{Configuration model and interpolation criteria}\label{sec:configuration}

We begin by recalling the configuration model in a form adapted to interpolation. Let $V$ be a finite vertex set, and let $\dd=(d_v)_{v\in V}$ be a sequence of nonnegative integers with even total degree
\[
  D=\sum_{v\in V}d_v.
\]
Attach $d_v$ labeled half-edges to each $v$, and denote the set of all half-edges by $\cH_{\dd}$. A partial pairing is a matching of a subset of $\cH_{\dd}$. Each partial pairing $m$ induces a multigraph $G[m]$ on $V$: every paired pair of half-edges gives an edge between their incident vertices, and unmatched half-edges are ignored. A uniformly random complete pairing induces the configuration-model multigraph $\CM_{\dd}$. When $V=[n]$ and $d_v=d$ for every $v$, we write $\CM_{n,d}$.

Fix a partition $V=A\sqcup B$. An edge of a partial pairing is an $A$-edge if both half-edges are incident to vertices of $A$, a $B$-edge if both are incident to vertices of $B$, and a cross-edge otherwise. For $\alpha,\beta,\gamma\in\mathbb Z_{\geq0}$, let $\cM(\alpha,\beta,\gamma)$ be the set of partial pairings with exactly $\alpha$ $A$-edges, $\beta$ $B$-edges, and $\gamma$ cross-edges. We call the triple feasible when this set is nonempty.

A graph pseudo-parameter is a real-valued function on labeled multigraphs; invariance under relabeling is not required. For a pseudo-parameter $g$ and a feasible triple, set
\begin{equation*}
  F_g(\alpha,\beta,\gamma)
  =\E_{m\in\cM(\alpha,\beta,\gamma)}g(G[m]),
\end{equation*}
where the expectation is uniform. Write
\[
  D_A=\sum_{v\in A}d_v,
  \qquad
  D_B=\sum_{v\in B}d_v,
  \qquad
  \psi(x)=7\sqrt{x\log(1+x)}.
\]
Since $D_A+D_B=D$ is even, $D_A$ and $D_B$ have the same parity. Thus
\[
  \left(\left\lfloor\frac{D_A}{2}\right\rfloor,
  \left\lfloor\frac{D_B}{2}\right\rfloor,0\right)
\]
is feasible: pair all but at most one half-edge internally in each block.

The following proposition is the form of Huang's interpolation theorem needed below; it follows from \cite[Proposition~4.2 and Corollary~4.3]{Huang2018}.

\begin{proposition}\label{prop:huang}
Suppose that a graph pseudo-parameter $g$ satisfies the following two conditions.

\begin{enumerate}[label=\textup{(H\arabic*)},leftmargin=2.8em]
\item\label{item:H1}
For any feasible triples $(\alpha,\beta,\gamma)$ and $(\alpha',\beta',\gamma')$,
\begin{equation}\label{eq:H1}
  \bigl|F_g(\alpha,\beta,\gamma)-F_g(\alpha',\beta',\gamma')\bigr|
  \leq
  |\alpha-\alpha'|+|\beta-\beta'|+|\gamma-\gamma'|.
\end{equation}

\item\label{item:H2}
If $\delta\geq2$ and $(\alpha,\beta,\gamma+\delta)$ is feasible, then
\begin{equation}\label{eq:H2}
  \frac12\bigl(F_g(\alpha+1,\beta,\gamma)
  +F_g(\alpha,\beta+1,\gamma)\bigr)
  \leq
  F_g(\alpha,\beta,\gamma+1)+\frac2\delta.
\end{equation}
\end{enumerate}
Then
\begin{equation}\label{eq:huang-cor}
  F_g\left(\left\lfloor\frac{D_A}{2}\right\rfloor,
  \left\lfloor\frac{D_B}{2}\right\rfloor,0\right)
  \leq
  \E g(\CM_{\dd})+\psi(D/2).
\end{equation}
\end{proposition}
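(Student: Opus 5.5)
This proposition is essentially a repackaging of Huang's interpolation theorem, so the plan is to match our hypotheses to his and then read off the endpoint comparison. In \cite[Proposition~4.2]{Huang2018}, Huang considers a graph pseudo-parameter $g$ on a vertex set split as $A\sqcup B$. He interpolates between a pairing consisting only of internal edges and a complete pairing. The interpolation runs along a path of triples: at each step one $A$-edge and one $B$-edge, or a small number of them, are traded for cross-edges, with the two half-edges of the remaining free pool paired in the cheapest way. The first step is to check that our triples $(\alpha,\beta,\gamma)$ and the uniform averages $F_g$ coincide with his parametrization of partial pairings. Since $F_g$ is defined as a uniform average over $\cM(\alpha,\beta,\gamma)$, this is only bookkeeping.

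The second step is to identify \ref{item:H1} and \ref{item:H2} with his two hypotheses. \ref{item:H1} is a Lipschitz condition in the number of edges of each type. It controls the error when a single edge is added or removed, and when we pass between triples that differ only by parity. \ref{item:H2} is the one-step ``local interpolation'' inequality. Adding one cross-edge should be at least as good, on average, as adding one internal edge, up to an error $2/\delta$. Here $\delta$ is the number of free half-edge pairs still available, which quantifies the discrepancy between uniform sampling from the remaining half-edges and the idealized choice. In Huang's Proposition~4.2 this appears, as I recall, with the same constants, possibly after normalizing $g$ by a factor. If his constants differ, I would rescale $g$, which preserves both conditions up to the same factor, and adjust accordingly.

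The third step is to sum the local inequality along the interpolation path, which is the content of \cite[Corollary~4.3]{Huang2018}. The path goes from $(\lfloor D_A/2\rfloor,\lfloor D_B/2\rfloor,0)$ to the full pairing. At each step the slack $2/\delta$ accumulates, and summing over $\delta$ up to $D/2$ gives a harmonic-type sum of order $\log D$. Huang's argument also involves a random choice of the number of cross-edges in the final complete pairing, which is concentrated around its mean with fluctuations of order $\sqrt{D}$. Handling this via \ref{item:H1} gives the error $O(\sqrt{D\log D})$, packaged as $\psi(D/2)=7\sqrt{(D/2)\log(1+D/2)}$. Finally, the expectation of $F_g$ over the law of the triple induced by a uniform complete pairing equals $\E g(\CM_{\dd})$, because conditioned on its triple a uniform complete pairing is uniform in $\cM(\cdot)$.

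The main obstacle is making the constants agree exactly with Huang's statement. This means checking that his $\psi$ has the constant $7$ and the argument $D/2$ (the number of edges), and that his hypotheses are stated for all feasible triples rather than only along the path. Since the present statement is quoted, I would resolve this by direct comparison with \cite{Huang2018} rather than by reproving the interpolation.
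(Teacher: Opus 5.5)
Your proposal takes the same route as the paper. The paper does not reprove this statement; it cites \cite[Proposition~4.2 and Corollary~4.3]{Huang2018} directly, so the only work is the bookkeeping you describe: matching the triples $(\alpha,\beta,\gamma)$, the uniform averages $F_g$, and (H1)--(H2) with Huang's setup, together with the paper's remark that his framework covers arbitrary pseudo-parameters on labeled multigraphs. Your sketch of the interpolation internals is speculative and not needed, and rescaling $g$ would change the constant $7$ in $\psi$ rather than preserve it, so the direct comparison with Huang's stated constants that you end with is the right way to settle that point.
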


Huang works with finite undirected multigraphs, allowing loops and multiple edges, and formulates the interpolation theorem for arbitrary pseudo-parameters on labeled multigraphs; invariance under relabeling is not required. In particular, a pseudo-parameter may depend on a predetermined family of labeled vertex subsets, as well as on the fixed labeled partition $A\sqcup B$. Therefore \Cref{prop:huang} applies to every parameter $g_{\cF}$ considered below.

\section{Edge increments for fixed cut families}\label{sec:fixed-family}

This section verifies the hypotheses of \Cref{prop:huang} for a broad class of constrained cut parameters. The first step identifies the edge-increment matrix with the separation matrix of a partition determined by the current optimizers. The second step combines the resulting conditional negative semidefiniteness with the uniform extension property of partial pairings.

Let $V$ be finite, and let $\cF\subseteq2^V$ be a predetermined nonempty family. Define
\begin{equation}\label{eq:fixed-family-param}
  g_{\cF}(G)=\max_{S\in\cF}\cut_G(S).
\end{equation}
The family $\cF$ is fixed independently of $G$ and need not be closed under complements.

\subsection{The increment partition}

For a multigraph $G$ on $V$ and vertices $u,v\in V$, let $G+uv$ be the multigraph obtained by adding one edge between $u$ and $v$; when $u=v$, the added edge is a loop. Define
\begin{equation}\label{eq:increment-matrix}
  \Delta^G_{uv}=g_{\cF}(G+uv)-g_{\cF}(G).
\end{equation}

\begin{lemma}\label{lem:increment}
For every $G$, one has $\Delta^G_{uv}\in\{0,1\}$ and $\Delta^G_{uu}=0$. Moreover, the symmetric matrix $\Delta^G=(\Delta^G_{uv})_{u,v\in V}$ is conditionally negative semidefinite: for every $z\in\R^V$ satisfying $\sum_{v\in V}z_v=0$,
\begin{equation}\label{eq:cnd}
  \sum_{u,v\in V}\Delta^G_{uv}z_uz_v\leq0.
\end{equation}
\end{lemma}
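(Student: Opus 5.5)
Let $\cF^*(G)\subseteq\cF$ denote the set of optimizers, i.e.\ those $S\in\cF$ with $\cut_G(S)=g_{\cF}(G)$. The approach is to compute $\Delta^G_{uv}$ in terms of $\cF^*(G)$, show that the matrix has a partition structure, and then evaluate its quadratic form directly.

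\emph{Values of the increments.} For every $S$, adding the edge $uv$ gives $\cut_{G+uv}(S)=\cut_G(S)+\1[\,|S\cap\{u,v\}|=1\,]$. The increment is therefore between $0$ and $1$. The lower bound $g_{\cF}(G+uv)\ge g_{\cF}(G)$ holds because each cut value weakly increases. The upper bound holds because each cut value rises by at most one. Since cut values are integers, $\Delta^G_{uv}\in\{0,1\}$. A loop never contributes to a cut, so $\Delta^G_{uu}=0$. We also get $\Delta^G_{uv}=1$ exactly when some optimizer $S\in\cF^*(G)$ separates $u$ and $v$. Indeed, a non-optimizer $S'$ satisfies $\cut_G(S')\le g_{\cF}(G)-1$, so after adding $uv$ it reaches at most $g_{\cF}(G)$ and cannot raise the maximum. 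Hence $\Delta^G_{uv}=0$ if and only if every optimizer puts $u$ and $v$ on the same side.

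\emph{Partition structure.} Define $u\sim v$ when every $S\in\cF^*(G)$ satisfies $\1_S(u)=\1_S(v)$. This is an equivalence relation: it says that the vectors $(\1_S(u))_{S\in\cF^*}$ and $(\1_S(v))_{S\in\cF^*}$ coincide. It is transitive, which is the key point. The "separated by some optimizer" relation would not obviously be transitive, but its complement is exactly this equality of vectors. Let $V=C_1\sqcup\dots\sqcup C_r$ be the resulting classes. Then $\Delta^G_{uv}=1-\sum_i\1[u\in C_i]\1[v\in C_i]$, so
\[
  \Delta^G=J-\sum_{i=1}^r\1_{C_i}\1_{C_i}^{\mathsf T},
\]
where $J$ is the all-ones matrix.

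\emph{Quadratic form.} Take $z$ with $\sum_v z_v=0$. Then $z^{\mathsf T}Jz=(\sum_v z_v)^2=0$, and so
\[
  \sum_{u,v}\Delta^G_{uv}z_uz_v=-\sum_{i=1}^r\Bigl(\sum_{v\in C_i}z_v\Bigr)^2\le0 ,
\]
which is \eqref{eq:cnd}.

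I do not expect a serious obstacle. The one step that needs care is the characterization of $\Delta^G_{uv}=0$, specifically the integrality argument showing that non-optimizers cannot contribute. Once that is in place, identifying the zero pattern with an equivalence relation, rather than with an arbitrary graph, is what makes the negative-sum-of-squares identity available.
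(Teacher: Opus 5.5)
Your proposal is correct and follows essentially the same route as the paper. You identify $\Delta^G_{uv}=1$ with separation by some current optimizer, using integrality to show non-optimizers cannot raise the maximum. You then note that non-separation by all optimizers is an equivalence relation, and write the quadratic form as $\bigl(\sum_v z_v\bigr)^2-\sum_j\bigl(\sum_{v\in C_j}z_v\bigr)^2$.
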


\begin{proof}
Let
\[
  M=g_{\cF}(G),
  \qquad
  \mathcal O=\{S\in\cF:\cut_G(S)=M\}
\]
be the nonempty family of current optimizers. Define a relation on $V$ by
\begin{equation*}
  u\sim v
  \quad\Longleftrightarrow\quad
  \1_{\{u\in S\}}=\1_{\{v\in S\}}
  \text{ for every }S\in\mathcal O.
\end{equation*}
This is an equivalence relation. We first show that
\begin{equation}\label{eq:increment-equivalence}
  \Delta^G_{uv}=\1_{\{u\not\sim v\}}.
\end{equation}

Suppose that $u\not\sim v$. Some optimizer $S\in\mathcal O$ separates $u$ and $v$, and therefore
\[
  g_{\cF}(G+uv)\geq\cut_{G+uv}(S)=M+1.
\]
Adding a single edge increases the value of every feasible cut by at most one, so $g_{\cF}(G+uv)\leq M+1$. Hence $\Delta^G_{uv}=1$.

Suppose instead that $u\sim v$. No member of $\mathcal O$ separates the two vertices, so every current optimizer retains value $M$ after the edge is added. If $T\in\cF\setminus\mathcal O$, integrality gives $\cut_G(T)\leq M-1$, and consequently $\cut_{G+uv}(T)\leq M$. Thus the new optimum is still $M$, which proves \eqref{eq:increment-equivalence}. In particular, $u\sim u$, and hence $\Delta^G_{uu}=0$.

Let $C_1,\dots,C_r$ be the equivalence classes of $\sim$. By \eqref{eq:increment-equivalence},
\[
  \Delta^G_{uv}
  =1-\sum_{j=1}^r
  \1_{\{u\in C_j\}}\1_{\{v\in C_j\}}.
\]
Therefore, whenever $\sum_v z_v=0$,
\begin{align*}
  \sum_{u,v\in V}\Delta^G_{uv}z_uz_v
  &=\left(\sum_{v\in V}z_v\right)^2
    -\sum_{j=1}^r\left(\sum_{v\in C_j}z_v\right)^2\\
  &=-\sum_{j=1}^r\left(\sum_{v\in C_j}z_v\right)^2
  \leq0.
\end{align*}
This proves \eqref{eq:cnd}.
\end{proof}

\subsection{Verification of the interpolation hypotheses}

We first record the elementary extension property that permits us to express adjacent values of $F_g$ by conditional edge increments.

\begin{lemma}\label{lem:uniform-extension}
Suppose that both $(\alpha,\beta,\gamma)$ and $(\alpha+1,\beta,\gamma)$ are feasible. Sample $m$ uniformly from $\cM(\alpha,\beta,\gamma)$ and then add a uniformly chosen pair of distinct unmatched half-edges incident to $A$. The resulting partial pairing is uniform on $\cM(\alpha+1,\beta,\gamma)$. The analogous statements hold for adding a $B$-edge or a cross-edge.
\end{lemma}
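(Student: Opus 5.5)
We prove \Cref{lem:uniform-extension} by a double-counting argument on the bipartite ``extension'' relation between $\cM(\alpha,\beta,\gamma)$ and $\cM(\alpha+1,\beta,\gamma)$. Declare $m\in\cM(\alpha,\beta,\gamma)$ and $m'\in\cM(\alpha+1,\beta,\gamma)$ related when $m'$ is obtained from $m$ by adding one $A$-edge, that is, $m\subset m'$ and $m'\setminus m$ is a single pair of half-edges both incident to $A$. The two-step sampling procedure then gives
\[
  \Pp(m')=\sum_{m\sim m'}\frac{1}{|\cM(\alpha,\beta,\gamma)|}\cdot\frac{1}{N(m)},
\]
where $N(m)$ is the number of unordered pairs of distinct unmatched $A$-half-edges in $m$. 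It therefore suffices to show two things: $N(m)$ does not depend on $m$, and the number of $m$ related to a given $m'$ does not depend on $m'$.

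For the first point, observe that $m$ uses exactly $2\alpha+\gamma$ half-edges incident to $A$: two for each $A$-edge and one for each cross-edge. Hence $m$ has $D_A-2\alpha-\gamma$ unmatched $A$-half-edges, and
\[
  N(m)=\binom{D_A-2\alpha-\gamma}{2}.
\]
This depends only on the triple. It is positive because $(\alpha+1,\beta,\gamma)$ is feasible, which forces $D_A-2\alpha-\gamma\ge 2$.

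For the second point, the partial pairings $m$ related to $m'$ are obtained exactly by deleting one of the $\alpha+1$ $A$-edges of $m'$, so each $m'$ has exactly $\alpha+1$ predecessors. Substituting both counts gives
\[
  \Pp(m')=\frac{\alpha+1}{|\cM(\alpha,\beta,\gamma)|\binom{D_A-2\alpha-\gamma}{2}},
\]
which is constant in $m'$. Every $m'$ has at least one predecessor, so the law is uniform on $\cM(\alpha+1,\beta,\gamma)$.

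The $B$-edge case is identical with $D_B-2\beta-\gamma$ in place of $D_A-2\alpha-\gamma$. For the cross-edge case, we choose a uniform pair consisting of one unmatched $A$-half-edge and one unmatched $B$-half-edge. The number of such choices is
\[
  (D_A-2\alpha-\gamma)(D_B-2\beta-\gamma),
\]
again constant in $m$. Each $m'\in\cM(\alpha,\beta,\gamma+1)$ has exactly $\gamma+1$ predecessors.

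The only point that needs care is reading ``uniformly chosen pair of distinct unmatched half-edges incident to $A$'' as a pair of unmatched half-edges each incident to $A$, and checking that the count of available pairs is determined by the triple alone. The argument relies on this, since otherwise the sampling weights would vary with $m$. The remaining steps are routine counting.
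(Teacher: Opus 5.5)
Your proposal is correct and follows essentially the same approach as the paper: double counting, using that every $m$ has exactly $\binom{D_A-2\alpha-\gamma}{2}$ extensions and every $m'$ has exactly $\alpha+1$ preimages, both depending only on the triple. Your explicit treatment of the cross-edge case, with $(D_A-2\alpha-\gamma)(D_B-2\beta-\gamma)$ extensions and $\gamma+1$ preimages, fills in what the paper leaves as ``identical.''
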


\begin{proof}
Every $m\in\cM(\alpha,\beta,\gamma)$ has
\[
  a=D_A-2\alpha-\gamma
\]
unmatched half-edges incident to $A$, and hence exactly $\binom a2$ possible $A$-edge extensions. Conversely, every $m'\in\cM(\alpha+1,\beta,\gamma)$ has exactly $\alpha+1$ preimages, obtained by deleting one of its $A$-edges. Both numbers depend only on the endpoint triple. The extension procedure therefore assigns the same probability to every $m'$. The proofs for $B$-edges and cross-edges are identical.
\end{proof}

\begin{proposition}\label{prop:fixed-family-huang}
For every nonempty predetermined family $\cF\subseteq2^V$, the pseudo-parameter $g_{\cF}$ in \eqref{eq:fixed-family-param} satisfies \ref{item:H1} and \ref{item:H2} of \Cref{prop:huang}, for every partition $V=A\sqcup B$ and every degree sequence $\dd$.
\end{proposition}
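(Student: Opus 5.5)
The plan is to verify the two hypotheses separately. Both rest on one identity: by \Cref{lem:uniform-extension}, a difference of adjacent values of $F_{g_{\cF}}$ equals the expected edge increment $\Delta^{G[m]}$ at a uniformly chosen new edge. Throughout, write $g=g_{\cF}$.

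For \ref{item:H1}, first note that feasibility is downward closed: deleting an edge of a partial pairing in $\cM(\alpha,\beta,\gamma)$ of the appropriate type gives a pairing in the triple with that coordinate lowered by one. Given two feasible triples, set $\underline\alpha=\min(\alpha,\alpha')$, and define $\underline\beta,\underline\gamma$ similarly. I will walk from $(\alpha,\beta,\gamma)$ down to $(\underline\alpha,\underline\beta,\underline\gamma)$ one unit at a time, then up to $(\alpha',\beta',\gamma')$ one unit at a time. Every intermediate triple lies coordinatewise below one of the endpoints, so it is feasible. At each step, \Cref{lem:uniform-extension} gives
\[
F_g(\text{larger})-F_g(\text{smaller})=\E\,\Delta^{G[m]}_{uv}
\]
for the random added edge $uv$. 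By \Cref{lem:increment} this lies in $[0,1]$. The number of steps is exactly $|\alpha-\alpha'|+|\beta-\beta'|+|\gamma-\gamma'|$, which gives \eqref{eq:H1}.

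For \ref{item:H2}, fix $m\in\cM(\alpha,\beta,\gamma)$ and write $\Delta=\Delta^{G[m]}$. Let $x_v$ be the number of unmatched half-edges at $v\in A$, with $x_v=0$ for $v\in B$, and let $y_v$ be defined symmetrically for $B$. Put $a=\sum x_v$ and $b=\sum y_v$. These counts depend only on the triple. Feasibility of $(\alpha,\beta,\gamma+\delta)$ forces $a,b\geq\delta\geq2$, so the three triples appearing in \eqref{eq:H2} are feasible.

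A uniform pair of distinct unmatched $A$-half-edges lands on the vertex pair $(u,v)$ with weight proportional to $x_ux_v$ when $u\neq v$. Pairs at the same vertex contribute nothing, since $\Delta_{uu}=0$. Combining this with \Cref{lem:uniform-extension}, I get
\[
F_g(\alpha+1,\beta,\gamma)-F_g(\alpha,\beta,\gamma)=\E\frac{x^{T}\Delta x}{a(a-1)}.
\]
In the same way,
\[
F_g(\alpha,\beta+1,\gamma)-F_g(\alpha,\beta,\gamma)=\E\frac{y^{T}\Delta y}{b(b-1)},
\qquad
F_g(\alpha,\beta,\gamma+1)-F_g(\alpha,\beta,\gamma)=\E\frac{x^{T}\Delta y}{ab}.
\]
Now apply \eqref{eq:cnd} with $z=x/a-y/b$, which satisfies $\sum_v z_v=0$. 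This gives
\[
\frac{x^{T}\Delta x}{a^2}+\frac{y^{T}\Delta y}{b^2}\leq\frac{2x^{T}\Delta y}{ab}.
\]
It remains to replace $a^2$ by $a(a-1)$. Since $0\leq\Delta_{uv}\leq1$, we have $x^{T}\Delta x\leq a^2$, and hence
\[
\frac{x^{T}\Delta x}{a(a-1)}-\frac{x^{T}\Delta x}{a^2}\leq\frac1{a-1}\leq\frac1{\delta-1}\leq\frac2\delta .
\]
The same bound holds for $y$. Averaging the two corrections, halving, and taking expectations over $m$ yields \eqref{eq:H2}.

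The main obstacle is the exact bookkeeping in the extension step. The normalizations must be $a(a-1)$ for ordered distinct half-edge pairs and $ab$ for cross pairs. The self-loop contributions must vanish, which is where $\Delta_{uu}=0$ is used. With these in place, the conditionally negative semidefinite inequality of \Cref{lem:increment} applies directly; once they are checked, the rest is routine.
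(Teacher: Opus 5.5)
Your proposal is correct and follows the paper's proof essentially step for step. For \ref{item:H1} you use the same downward-closed path argument. For \ref{item:H2} you use the same ingredients: the conditional increment formulas from \Cref{lem:uniform-extension}, the test vector $z=x/a-y/b$ in \eqref{eq:cnd}, and the correction bound $1/(a-1)\le 1/(\delta-1)\le 2/\delta$. The only difference is cosmetic: you work with the unnormalized counts $x,y$, whereas the paper uses the probability weights $\mu_A,\mu_B$ and the quantities $Q_A,Q_B,Q_{AB}$.
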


\begin{proof}
We first verify \ref{item:H1}. Adding one edge changes $g_{\cF}$ by at most one. By \Cref{lem:uniform-extension}, if two feasible triples differ by one in a single coordinate, then the corresponding values of $F_{g_{\cF}}$ differ by at most one.

The set of feasible triples is coordinatewise downward closed: deleting selected edges from a witnessing partial pairing preserves feasibility. Given two feasible triples $q$ and $q'$, their coordinatewise minimum $q\wedge q'$ is therefore feasible. Every monotone coordinate path from $q\wedge q'$ to either $q$ or $q'$ remains inside the feasible set. Applying the triangle inequality through $q\wedge q'$ and summing the adjacent one-edge bounds along the two paths gives \eqref{eq:H1}.

It remains to prove \ref{item:H2}. Fix $\delta\geq2$ such that $(\alpha,\beta,\gamma+\delta)$ is feasible. Deleting $\delta$ cross-edges from a witnessing matching shows that $(\alpha,\beta,\gamma)$ is feasible; fix $m\in\cM(\alpha,\beta,\gamma)$. Write $G=G[m]$. For $v\in V$, let $c_v$ be the number of unmatched half-edges incident to $v$, and set
\begin{equation*}
  a=\sum_{v\in A}c_v=D_A-2\alpha-\gamma,
  \qquad
  b=\sum_{v\in B}c_v=D_B-2\beta-\gamma.
\end{equation*}
The assumed feasibility of $(\alpha,\beta,\gamma+\delta)$ implies
\begin{equation}\label{eq:ab-delta}
  a\geq\delta,
  \qquad
  b\geq\delta.
\end{equation}
In particular, the three neighboring triples appearing in \eqref{eq:H2} are feasible.

Conditional on $m$, we define an available $A$-edge to mean an unordered pair of distinct unmatched half-edges both incident to vertices of $A$; available $B$-edges are defined analogously, and an available cross-edge means a pair consisting of one unmatched half-edge incident to $A$ and one incident to $B$. Let $e_A$, $e_B$, and $e_{AB}$ be chosen uniformly from these three respective sets of available half-edge pairs, and identify each chosen pair with the edge between the vertices incident to its two half-edges. Define
\[
  I_A=\E\bigl[g_{\cF}(G+e_A)-g_{\cF}(G)\mid m\bigr],
\]
and define $I_B$ and $I_{AB}$ analogously.

Let $\Delta=\Delta^G$ be the increment matrix in \eqref{eq:increment-matrix}. Introduce the probability weights
\[
  \mu_A(u)=\frac{c_u}{a}\quad (u\in A),
  \qquad
  \mu_B(v)=\frac{c_v}{b}\quad (v\in B),
\]
and set
\begin{align*}
  Q_A&=\sum_{u,v\in A}\mu_A(u)\mu_A(v)\Delta_{uv},\\
  Q_B&=\sum_{u,v\in B}\mu_B(u)\mu_B(v)\Delta_{uv},\\
  Q_{AB}&=\sum_{\substack{u\in A\\v\in B}}
  \mu_A(u)\mu_B(v)\Delta_{uv}.
\end{align*}
Since $0\leq\Delta_{uv}\leq1$, one has $0\leq Q_A,Q_B\leq1$.

Because the summand $\Delta_{uv}$ is symmetric in $u$ and $v$, choosing a uniformly random unordered pair of distinct unmatched half-edges incident to $A$ gives the same expectation as choosing a uniformly random ordered pair of such half-edges. If both half-edges are incident to the same vertex, their contribution vanishes because $\Delta_{uu}=0$. Consequently,
\begin{equation}\label{eq:I-vs-Q}
  I_A=\frac{a}{a-1}Q_A,
  \qquad
  I_B=\frac{b}{b-1}Q_B,
  \qquad
  I_{AB}=Q_{AB}.
\end{equation}
The first identity follows from
\begin{align*}
  I_A
  &=\frac1{a(a-1)}
    \sum_{u,v\in A}
    \bigl(c_uc_v-\1_{\{u=v\}}c_u\bigr)\Delta_{uv}\\
  &=\frac1{a(a-1)}\sum_{u,v\in A}c_uc_v\Delta_{uv}
   =\frac{a}{a-1}Q_A.
\end{align*}
The other identities are proved similarly.

Apply \Cref{lem:increment} to the vector $z\in\R^V$ defined by
\[
  z_u=\mu_A(u)\quad (u\in A),
  \qquad
  z_v=-\mu_B(v)\quad (v\in B).
\]
Its coordinates sum to zero, and hence
\begin{equation}\label{eq:Q-cnd}
  Q_A+Q_B-2Q_{AB}\leq0.
\end{equation}
Combining \eqref{eq:I-vs-Q} and \eqref{eq:Q-cnd} gives
\begin{align}
  \frac{I_A+I_B}{2}-I_{AB}
  &=\frac12\bigl(Q_A+Q_B-2Q_{AB}\bigr)
    +\frac{Q_A}{2(a-1)}+\frac{Q_B}{2(b-1)}\notag\\
  &\leq \frac1{2(a-1)}+\frac1{2(b-1)}\notag\\
  &\leq \frac1{\delta-1}
  \leq \frac2\delta.
  \label{eq:local-increment}
\end{align}
Here the penultimate inequality uses \eqref{eq:ab-delta}, and the final inequality uses $\delta\geq2$.

Average \eqref{eq:local-increment} over uniform $m\in\cM(\alpha,\beta,\gamma)$. By \Cref{lem:uniform-extension},
\begin{align*}
  \E I_A&=F_{g_{\cF}}(\alpha+1,\beta,\gamma)
           -F_{g_{\cF}}(\alpha,\beta,\gamma),\\
  \E I_B&=F_{g_{\cF}}(\alpha,\beta+1,\gamma)
           -F_{g_{\cF}}(\alpha,\beta,\gamma),\\
  \E I_{AB}&=F_{g_{\cF}}(\alpha,\beta,\gamma+1)
           -F_{g_{\cF}}(\alpha,\beta,\gamma).
\end{align*}
Substitution into \eqref{eq:local-increment} yields \eqref{eq:H2}.
\end{proof}

\section{A doubling comparison for fixed-cardinality cuts}\label{sec:doubling}

We now apply the fixed-family interpolation to a two-block constraint. The disconnected endpoint produces two independent fixed-cardinality cut problems, while the unconditioned complete configuration model appearing on the other side of Huang's comparison admits the pointwise upper bound by maximum bisection. This gives the comparison that drives the rest of the proof.

For a multigraph $G$ on an $n$-vertex set and $0\leq k\leq n$, define
\begin{equation*}
  X_k(G)=\max_{\substack{S\subseteq V(G)\\ |S|=k}}\cut_G(S).
\end{equation*}
Taking complements gives
\begin{equation}\label{eq:X-complement}
  X_k(G)=X_{n-k}(G).
\end{equation}

If $\dd=(d_1,\dots,d_n)$ is a degree sequence, let $\dd^{\oplus2}$ be the degree sequence on two disjoint copies $A$ and $B$ of $[n]$ obtained by placing one copy of $\dd$ on each block.

\begin{proposition}\label{prop:doubling}
Let $\dd=(d_1,\dots,d_n)$ have even total degree
\[
  D=\sum_{i=1}^n d_i.
\]
Then, for every $0\leq k\leq n$,
\begin{equation}\label{eq:doubling-general}
  2\E X_k(\CM_{\dd})
  \leq
  \E\MB(\CM_{\dd^{\oplus2}})
  +\psi(D).
\end{equation}
\end{proposition}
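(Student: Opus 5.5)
We apply \Cref{prop:huang} on the doubled vertex set $V=A\sqcup B$, with degree sequence $\dd^{\oplus2}$, to the pseudo-parameter $g_{\cF}$ for the fixed family
\[
  \cF=\{S\subseteq V:\ |S\cap A|=k,\ |S\cap B|=n-k\}.
\]
This family is nonempty for every $0\leq k\leq n$. By \Cref{prop:fixed-family-huang}, $g_{\cF}$ satisfies \ref{item:H1} and \ref{item:H2}. Here the total degree is $2D$, with $D_A=D_B=D$, so \eqref{eq:huang-cor} gives
\[
  F_{g_{\cF}}\left(\left\lfloor\tfrac D2\right\rfloor,\left\lfloor\tfrac D2\right\rfloor,0\right)\leq \E g_{\cF}(\CM_{\dd^{\oplus2}})+\psi(D).
\]
The error term is $\psi(2D/2)=\psi(D)$, which is exactly the one in \eqref{eq:doubling-general}.

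The right-hand side is handled pointwise. Every $S\in\cF$ has $|S|=k+(n-k)=n=|V|/2$, so $\cut_G(S)\leq\MB(G)$ for every multigraph $G$ on $V$. Hence $g_{\cF}\leq\MB$, and taking expectations gives $\E g_{\cF}(\CM_{\dd^{\oplus2}})\leq\E\MB(\CM_{\dd^{\oplus2}})$.

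For the left-hand side, $D$ is even, so $\lfloor D/2\rfloor=D/2$. Then $\cM(D/2,D/2,0)$ consists of the complete pairings with no cross-edges. Such a pairing is an independent pair $(m_A,m_B)$ of uniformly random complete pairings on the two blocks. The graph $G[m]$ is the disjoint union of $G_A\overset{d}{=}\CM_{\dd}$ on $A$ and $G_B\overset{d}{=}\CM_{\dd}$ on $B$, with $G_A$ and $G_B$ independent.

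Since there are no cross-edges, $\cut_{G[m]}(S)=\cut_{G_A}(S\cap A)+\cut_{G_B}(S\cap B)$. The two constraints in $\cF$ act on separate blocks, so the maximum splits:
\[
  g_{\cF}(G[m])=X_k(G_A)+X_{n-k}(G_B).
\]
By \eqref{eq:X-complement}, $X_{n-k}(G_B)=X_k(G_B)$. Taking expectations, the left-hand side equals $2\E X_k(\CM_{\dd})$, and combining this with the previous two paragraphs proves \eqref{eq:doubling-general}.

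There is no real obstacle here, since the heavy lifting is done by \Cref{prop:fixed-family-huang}. The step needing the most care is identifying the disconnected endpoint: we must check that uniformity on $\cM(D/2,D/2,0)$ is exactly the product of the uniform complete pairings on each block, using the labeled copy of $\dd$ in each block.
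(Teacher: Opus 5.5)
Your proposal is correct and follows the paper's proof essentially step for step: the same two-block family $\cF_k$, the factorization of the uniform law at the no-cross-edge endpoint into two independent copies of $\CM_{\dd}$ combined with \eqref{eq:X-complement}, and the pointwise bound $g_{\cF_k}\leq\MB$ because every feasible set has size $n$. Your checks that $\lfloor D/2\rfloor=D/2$ and that the error term is $\psi(2D/2)=\psi(D)$ match the paper's bookkeeping.
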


\begin{proof}
Let $A$ and $B$ be the two copies of $[n]$. Consider the predetermined family
\begin{equation*}
  \cF_k
  =\{S\subseteq A\sqcup B:|S\cap A|=k,\ |S\cap B|=n-k\},
\end{equation*}
and write $g_k=g_{\cF_k}$. By \Cref{prop:fixed-family-huang}, the pseudo-parameter $g_k$ satisfies the hypotheses of \Cref{prop:huang}.

The total degree in each block is $D$, which is even. At the no-cross-edge endpoint in \eqref{eq:huang-cor}, every half-edge in $A$ is paired within $A$, and every half-edge in $B$ is paired within $B$. Indeed, the pairing space at this endpoint is the Cartesian product of the complete-pairing spaces in $A$ and $B$, so its uniform law factors. A uniform pairing at this endpoint is therefore the disjoint union of two independent configuration-model graphs $G_A,G_B$, each distributed as $\CM_{\dd}$. Since there are no cross-edges,
\begin{align*}
  g_k(G_A\sqcup G_B)
  &=\max_{\substack{S_A\subseteq A,\ |S_A|=k\\
                     S_B\subseteq B,\ |S_B|=n-k}}
      \bigl(\cut_{G_A}(S_A)+\cut_{G_B}(S_B)\bigr)\\
  &=X_k(G_A)+X_{n-k}(G_B).
\end{align*}
By \eqref{eq:X-complement}, the expectation of this quantity is $2\E X_k(\CM_{\dd})$.

For every complete pairing on $A\sqcup B$ and every $S\in\cF_k$,
\[
  |S|=k+(n-k)=n.
\]
Thus every feasible set is a bisection of the $2n$ vertices, and consequently
\[
  g_k(G)\leq\MB(G)
\]
for every multigraph $G$ on $A\sqcup B$.

The complete configuration model with degree sequence $\dd^{\oplus2}$ has total degree $2D$ and hence $D$ edges. Applying \eqref{eq:huang-cor} and then the preceding pointwise bound gives
\[
  2\E X_k(\CM_{\dd})
  \leq \E g_k(\CM_{\dd^{\oplus2}})+\psi(D)
  \leq \E\MB(\CM_{\dd^{\oplus2}})+\psi(D).
\]
This is \eqref{eq:doubling-general}.
\end{proof}

For the constant degree sequence, write
\begin{equation*}
  a_{n,k}=\E X_k(\CM_{n,d}),
  \qquad
  B_n=\E\MB(\CM_{n,d}),
  \qquad
  C_n=\E\MC(\CM_{n,d}),
\end{equation*}
whenever $n$ is even. Since the total degree of $\CM_{n,d}$ is $dn$, \Cref{prop:doubling} yields the estimate stated in the introduction.

\begin{corollary}\label{cor:doubling-regular}
For every fixed $d\geq1$, every even $n$, and every $0\leq k\leq n$,
\begin{equation*}
  2a_{n,k}
  \leq B_{2n}+7\sqrt{dn\log(1+dn)}.
\end{equation*}
\end{corollary}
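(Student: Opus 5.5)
This is a direct specialization of \Cref{prop:doubling} to the constant degree sequence $d_i=d$ for all $i\in[n]$, so the plan is to check that the hypotheses and the objects in \eqref{eq:doubling-general} match those in the corollary.

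First, the total degree is $D=dn$. Since $n$ is even, $D$ is even for every $d\geq1$. This is exactly the parity hypothesis of \Cref{prop:doubling}, so \eqref{eq:doubling-general} holds for every $0\leq k\leq n$.

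Next, identify the three terms in \eqref{eq:doubling-general}:
\begin{itemize}
\item With this choice of $\dd$, $\CM_{\dd}=\CM_{n,d}$, so $2\E X_k(\CM_{\dd})=2a_{n,k}$.
\item The doubled sequence $\dd^{\oplus2}$ is the constant sequence $d$ on the $2n$ vertices of $A\sqcup B$. Hence $\CM_{\dd^{\oplus2}}$ is the $d$-regular configuration model on $2n$ labeled vertices, and since $2n$ is even, $\E\MB(\CM_{\dd^{\oplus2}})=B_{2n}$.
\item The error term is $\psi(D)=7\sqrt{dn\log(1+dn)}$, directly from the definition of $\psi$.
\end{itemize}
Substituting these three identifications gives the stated inequality.

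The only point needing care is that $B_{2n}$ is defined on $[2n]$, whereas $\CM_{\dd^{\oplus2}}$ lives on $A\sqcup B$. Any bijection $A\sqcup B\to[2n]$ carries the uniform pairing to the uniform pairing, and $\MB$ is invariant under relabeling. So the two expectations agree, and there is no real obstacle in the argument.
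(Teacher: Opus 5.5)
Your proposal is correct and is the same argument as the paper, which specializes \Cref{prop:doubling} to the constant degree sequence with $D=dn$ (even since $n$ is even), so that $\psi(D)=7\sqrt{dn\log(1+dn)}$ and $\CM_{\dd^{\oplus2}}$ is the $d$-regular configuration model on $2n$ vertices. Your remark about relabeling $A\sqcup B$ as $[2n]$ is a harmless detail that the paper leaves implicit.
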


\section{Concentration and the configuration-model limit}\label{sec:configuration-proof}

We next pass from the fixed-cardinality comparison to the unrestricted maximum cut. We first prove a concentration estimate that is uniform over every predetermined cut family. A union bound over the possible cardinalities then controls the maximum over $k$. Finally, Huang's convergence theorem for maximum bisection identifies the common limiting density.

\subsection{Uniform concentration in the cut cardinality}

\begin{lemma}[Pairing concentration]\label{lem:concentration}
Let $\dd$ be a degree sequence with even total degree $D$, and let $\cF\subseteq2^V$ be nonempty and predetermined. If $D\geq2$, then, for every $t\geq0$,
\begin{equation}\label{eq:concentration}
  \Pp\left(\left|g_{\cF}(\CM_{\dd})-
  \E g_{\cF}(\CM_{\dd})\right|\geq t\right)
  \leq 2\exp\left(-\frac{t^2}{4D}\right).
\end{equation}
For every even $D\geq0$,
\begin{equation}\label{eq:variance}
  \Var\bigl(g_{\cF}(\CM_{\dd})\bigr)\leq8D.
\end{equation}
\end{lemma}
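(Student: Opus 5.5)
We will use the standard martingale approach for uniform random pairings. We expose the pairing one half-edge at a time and apply the Azuma--Hoeffding inequality. The variance bound then follows by integrating the tail, with a separate trivial case for small $D$.

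\textbf{Step 1: the switching Lipschitz property.} Let $m$ and $m'$ be complete pairings of $\cH_{\dd}$ that differ by a single switch: $m$ contains pairs $\{x,y\},\{w,z\}$ and $m'$ contains $\{x,w\},\{y,z\}$, with all other pairs common. Then $G[m']$ is obtained from $G[m]$ by deleting two edges and adding two edges. Since adding or deleting one edge changes $\cut_G(S)$ by at most one for every $S$, the maximum $g_{\cF}$ also changes by at most one per edge. Hence $|g_{\cF}(G[m])-g_{\cF}(G[m'])|\leq2$.

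\textbf{Step 2: the Doob martingale.} Fix an ordering of the half-edges. Reveal the pairing sequentially: at each step, take the smallest unmatched half-edge and reveal its partner. This takes $N=D/2$ steps. Let $\cF_i$ be the $\sigma$-field after $i$ steps and set $Y_i=\E[g_{\cF}(\CM_{\dd})\mid\cF_i]$.

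We bound the increments by the standard coupling. Condition on $\cF_{i-1}$, and let $x$ be the half-edge to be matched at step $i$. For two possible partners $y$ and $y'$, map each completion with $\{x,y\}$ to a completion with $\{x,y'\}$ by switching $\{x,y\},\{y',w\}\mapsto\{x,y'\},\{y,w\}$. This is a bijection between uniform conditional laws, and each image differs from its preimage by one switch. By Step 1, the conditional expectations given $\{x,y\}$ and given $\{x,y'\}$ differ by at most $2$. Therefore $Y_i$ ranges over an interval of length at most $2$ given $\cF_{i-1}$, and $|Y_i-Y_{i-1}|\leq2$.

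\textbf{Step 3: the tail bound.} Apply Azuma--Hoeffding with range-type increments of length $c_i=2$. This gives the bound $2\exp(-2t^2/\sum c_i^2)=2\exp(-2t^2/(4N))=2\exp(-t^2/D)$, which is stronger than \eqref{eq:concentration}. Using only $|Y_i-Y_{i-1}|\leq2$ instead gives $2\exp(-t^2/(2\cdot4N))=2\exp(-t^2/(4D))$, which is exactly the claimed bound. So either version suffices, and we may use the cruder one to match the stated constant.

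\textbf{Step 4: the variance bound.} If $D=0$, the graph is empty and the variance is $0$. If $D\geq2$, integrate the tail bound:
\[
\Var\leq\int_0^\infty 2t\cdot 2e^{-t^2/(4D)}\,dt=8D.
\]

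The main point needing care is Step 2. The bijection must be a genuine measure-preserving map between the two conditional uniform laws, which holds because both are uniform over pairings of the same remaining set with one pair fixed. We must also handle the degenerate case where the remaining partner set is a single half-edge. In that case there is no choice, and the increment is zero.
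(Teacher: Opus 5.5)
Your proposal is correct and follows essentially the same route as the paper: sequential exposure of the partner of the least unmatched half-edge, the switching bijection between the two conditional completion spaces, a per-step martingale bound of $2$, Azuma--Hoeffding in the form $2\exp(-t^2/(2\sum c_i^2))$, and integration of the tail for the variance. One small repair is needed in Step 1. Counting ``at most one per edge'' over two deleted and two added edges only gives $4$. To get $2$, note (as the paper does) that for fixed $S$ the two removed edges contribute a value in $\{0,1,2\}$ to $\cut_G(S)$ and the two added edges also contribute a value in $\{0,1,2\}$, so the difference is at most $2$; equivalently, deletions only decrease and additions only increase $\cut_G(S)$.
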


\begin{proof}
If $D=0$, the graph and the value of $g_{\cF}$ are deterministic, so the variance statement is immediate. Assume henceforth that $D\geq2$. Expose a uniformly random complete pairing one pair at a time. At each step, take the least unmatched half-edge in a fixed ordering and reveal its uniformly random partner. There are $D/2$ steps. Let $(M_j)$ be the Doob martingale obtained by taking the conditional expectation of $g_{\cF}$ after the first $j$ pairs have been exposed.

We claim that $|M_j-M_{j-1}|\leq2$ for every $j$. Fix a stage, let $h$ be the half-edge currently being paired, and consider two possible partners $x$ and $y$. There is a measure-preserving bijection between the completion space conditional on choosing $hx$ and the completion space conditional on choosing $hy$. Indeed, in a completion containing the pair $\{h,x\}$, let $z$ be the partner of $y$ and perform the switching
\[
  \{h,x\},\ \{y,z\}
  \quad\longmapsto\quad
  \{h,y\},\ \{x,z\}.
\]
All other pairs are left unchanged. The inverse map is obtained by applying the same switching with $x$ and $y$ interchanged. Hence the bijection preserves the uniform laws on the two completion spaces.

For a fixed $S\in\cF$, write
\[
  \chi_S(u,v)=\1_{\{|\{u,v\}\cap S|=1\}}
\]
for the indicator that the edge joining the vertices incident to the half-edges $u$ and $v$ crosses the cut defined by $S$. The contribution of the two affected pairs before the switching is
\[
  \chi_S(h,x)+\chi_S(y,z)\in\{0,1,2\},
\]
whereas their contribution afterward is
\[
  \chi_S(h,y)+\chi_S(x,z)\in\{0,1,2\}.
\]
Their difference in absolute value is therefore at most $2$. All other edge contributions agree, so the two values of $\cut(S)$ differ by at most $2$. Taking the maximum over the same fixed family $\cF$ preserves this bound.

It follows that the conditional expectations corresponding to any two possible partners of $h$ differ by at most $2$. Since $M_{j-1}$ is a convex combination of these conditional expectations and $M_j$ is one of them, we obtain
\[
  |M_j-M_{j-1}|\leq2,
\]
as claimed.

Azuma--Hoeffding's inequality now gives
\[
  \Pp\bigl(|M_{D/2}-M_0|\geq t\bigr)
  \leq2\exp\left(-\frac{t^2}{2\cdot(D/2)\cdot2^2}\right)
  =2\exp\left(-\frac{t^2}{4D}\right).
\]
This is \eqref{eq:concentration}. Integrating the tail bound yields
\begin{align*}
  \Var(g_{\cF}(\CM_{\dd}))
  &=\int_0^\infty 2t\,
    \Pp\left(\left|g_{\cF}(\CM_{\dd})-
    \E g_{\cF}(\CM_{\dd})\right|\geq t\right)\,dt\\
  &\leq\int_0^\infty4t\exp\left(-\frac{t^2}{4D}\right)dt
  =8D,
\end{align*}
which proves \eqref{eq:variance}.
\end{proof}

\begin{lemma}\label{lem:max-over-k}
For fixed $d\geq1$ and even $n$,
\begin{equation}\label{eq:max-over-k}
  C_n
  \leq
  \max_{0\leq k\leq n}a_{n,k}
  +r_{n,d},
\end{equation}
where
\begin{equation*}
  r_{n,d}
  =2\sqrt{dn\log(2n+2)}
   +\sqrt{\frac{dn}{\log(2n+2)}}.
\end{equation*}
In particular, $r_{n,d}=O_d(\sqrt{n\log n})$.
\end{lemma}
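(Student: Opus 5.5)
We write $\MC(\CM_{n,d})=\max_{0\leq k\leq n}X_k(\CM_{n,d})$ and set $Y_k=X_k(\CM_{n,d})-a_{n,k}$. Then pointwise
\[
  \MC(\CM_{n,d})\leq\max_k a_{n,k}+\max_k Y_k,
\]
so it suffices to show $\E\max_k Y_k\leq r_{n,d}$. Each $X_k=g_{\cF}$ with $\cF=\{S:|S|=k\}$ predetermined. The total degree is $D=dn$. If $D\geq2$, \Cref{lem:concentration} gives
\[
  \Pp(Y_k\geq t)\leq 2\exp\bigl(-t^2/(4dn)\bigr)
\]
for every $k$ and every $t\geq0$. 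The degenerate case $dn=0$ cannot occur for $n\geq2$, and $n=0$ is trivial.

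Next we bound the expected maximum by integrating the union-bound tail. Let $N=n+1$ be the number of cardinalities and $\sigma^2=4dn$. For any threshold $t_0\geq0$,
\[
  \E\max_k Y_k\leq t_0+\int_{t_0}^\infty\Pp\Bigl(\max_k Y_k\geq t\Bigr)\,dt\leq t_0+2N\int_{t_0}^\infty e^{-t^2/\sigma^2}\,dt .
\]
For the Gaussian tail we use $\int_{t_0}^\infty e^{-t^2/\sigma^2}dt\leq\frac{\sigma^2}{2t_0}e^{-t_0^2/\sigma^2}$. We choose
\[
  t_0=\sigma\sqrt{\log(2N)}=2\sqrt{dn\log(2n+2)},
\]
so that $2Ne^{-t_0^2/\sigma^2}=1$. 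The remainder is then
\[
  \frac{\sigma^2}{2t_0}=\frac{4dn}{4\sqrt{dn\log(2n+2)}}=\sqrt{\frac{dn}{\log(2n+2)}},
\]
which gives exactly $r_{n,d}$. We also use that $\max_k Y_k$ may be negative: we bound $\E Z\leq\E Z^+\leq t_0+\int_{t_0}^\infty\Pp(Z\geq t)\,dt$.

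Taking expectations in the pointwise bound yields $C_n\leq\max_k a_{n,k}+r_{n,d}$. Finally, $r_{n,d}=O_d(\sqrt{n\log n})$ is immediate from the formula.

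The only delicate step is matching the constants of $r_{n,d}$ exactly. This rests on choosing $t_0$ so that the factor $2N$ is cancelled, and on applying the standard Mills-type tail bound with variance proxy $4dn$. Everything else is a direct application of \Cref{lem:concentration}.
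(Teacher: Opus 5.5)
Your proof is correct and follows essentially the same route as the paper: the pointwise bound $\MC\leq\max_k a_{n,k}+\max_k Y_k$, a union bound over the $n+1$ cardinalities via \Cref{lem:concentration}, and the threshold $t_0=2\sqrt{dn\log(2n+2)}$ chosen so that $2(n+1)e^{-t_0^2/(4dn)}=1$. The Mills-type tail bound then leaves the remainder $2dn/t_0=\sqrt{dn/\log(2n+2)}$, which matches $r_{n,d}$ exactly.
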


\begin{proof}
Since $\MC(G)=\max_{0\leq k\leq n}X_k(G)$,
\[
  C_n
  \leq \max_k a_{n,k}
  +\E\left[\max_k\bigl(X_k(\CM_{n,d})-a_{n,k}\bigr)\right].
\]
For each $k$, \Cref{lem:concentration} applies to the family of all $k$-element subsets. A union bound therefore gives
\begin{equation}\label{eq:union-tail}
  \Pp\left(\max_k\bigl(X_k(\CM_{n,d})-a_{n,k}\bigr)\geq t\right)
  \leq2(n+1)\exp\left(-\frac{t^2}{4dn}\right).
\end{equation}
Set
\[
  t_0=2\sqrt{dn\log(2n+2)}.
\]
Using $\E Y\leq\E Y_+$ and integrating \eqref{eq:union-tail}, we obtain
\[
  \E\max_k(X_k-a_{n,k})
  \leq t_0+
  \int_{t_0}^\infty2(n+1)
  \exp\left(-\frac{t^2}{4dn}\right)dt.
\]
For $x>0$,
\[
  \int_x^\infty\exp\left(-\frac{t^2}{4dn}\right)dt
  \leq \frac{2dn}{x}\exp\left(-\frac{x^2}{4dn}\right),
\]
which follows by comparing $1$ with $t/x$ inside the integral. Since
\[
  \exp\left(-\frac{t_0^2}{4dn}\right)=\frac1{2n+2},
\]
the tail integral is at most $2dn/t_0$. Consequently,
\[
  \E\max_k(X_k-a_{n,k})
  \leq t_0+\frac{2dn}{t_0}=r_{n,d}.
\]
This proves \eqref{eq:max-over-k}.
\end{proof}

Combining \Cref{cor:doubling-regular,lem:max-over-k} gives
\begin{equation}\label{eq:C-vs-B2n}
  C_n
  \leq
  \frac12B_{2n}
  +\frac72\sqrt{dn\log(1+dn)}
  +r_{n,d}.
\end{equation}

\subsection{The common limiting density}

We now invoke Huang's convergence theorem for maximum bisection.

\begin{lemma}\label{lem:b-limit}
For each fixed integer $d\geq1$, there is a constant $b_d$ such that
\begin{equation}\label{eq:B-limit}
  \frac{B_n}{n}\longrightarrow b_d
\end{equation}
as $n\to\infty$ through even integers.
\end{lemma}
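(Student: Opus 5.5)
This lemma is Huang's convergence theorem for maximum bisection in the regular configuration model \cite{Huang2018}. The plan is to cite it, but also to indicate how it follows from the tools already assembled, so the reader can check that the normalization (even $n$, multigraph $\CM_{n,d}$, loops contributing nothing) agrees with ours.

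The self-contained route is a near-superadditivity argument followed by a Fekete-type lemma. Write $n=n_1+n_2$ with $n_1,n_2$ even. Take $V=A\sqcup B$ with $|A|=n_1$ and $|B|=n_2$, constant degree $d$, and the predetermined family
\[
  \cF=\{S:|S\cap A|=n_1/2,\ |S\cap B|=n_2/2\}.
\]
By \Cref{prop:fixed-family-huang}, $g_{\cF}$ satisfies \ref{item:H1} and \ref{item:H2}.

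At the disconnected endpoint of \eqref{eq:huang-cor}, the graph is the disjoint union of independent copies of $\CM_{n_1,d}$ and $\CM_{n_2,d}$. Here $dn_1$ and $dn_2$ are even, so the endpoint triple is $(dn_1/2,dn_2/2,0)$. The value of $g_{\cF}$ there is $\MB(G_A)+\MB(G_B)$. On the complete side, every $S\in\cF$ has $|S|=n/2$, so $g_{\cF}\leq\MB$ pointwise. Hence
\[
  B_{n_1}+B_{n_2}\leq B_{n_1+n_2}+\psi(dn/2).
\]
Since $\psi(x)=O(\sqrt{x\log x})$, the error is $o(n)$ and grows sublinearly.

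Next, set $\beta_m=B_{2m}$. Then $\beta_{m_1}+\beta_{m_2}\leq\beta_{m_1+m_2}+\phi(m_1+m_2)$ with $\phi(m)=\psi(dm)$ increasing, and $\sum_j\phi(2^j)/2^j<\infty$. Also $0\leq\beta_m\leq dm$, because $\MB$ is at most the number of edges. The de Bruijn--Erdős near-superadditive lemma, in the form Huang uses, then gives that $\beta_m/m$ converges. Equivalently, $B_n/n\to b_d$ through even $n$.

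The main obstacle is this Fekete-type step. Plain Fekete does not apply because of the error term, so the argument should go as follows. First show that $\beta_{2^j m}/(2^j m)$ is Cauchy in $j$, using $\sum_j\phi(2^j m)/2^j<\infty$. Then interpolate to general $m$ by splitting $m$ into dyadic pieces. This also uses that adding or removing $O(1)$ vertices changes $\beta$ by $O(1)$; that stability follows from \ref{item:H1}-type Lipschitz control, or from the cruder bound $0\leq\beta\leq dm$ applied to small pieces.

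Since all of this is exactly Huang's theorem, the proof in the paper can simply invoke \cite{Huang2018} and note that his parameter coincides with $\MB$ as defined here.
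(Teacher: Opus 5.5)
Your proposal is correct, but it takes a different route from the paper. The paper reproves nothing here. It cites Huang's convergence theorem \cite[Theorem~3.1]{Huang2018} for general degree sequences. The only work is a parity fix: it defines a degree list for every $N$, with one vertex of degree $d+1$ when $Nd$ is odd, so that the empirical-distribution hypotheses hold, and then reads off the even-$N$ subsequence, where the list is exactly $d$-regular.

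You instead derive the regular case from the interpolation inequality \Cref{prop:huang} together with \Cref{prop:fixed-family-huang}, applied to the balanced-split family. This gives $B_{n_1}+B_{n_2}\leq B_{n_1+n_2}+\psi(dn/2)$ for all even $n_1,n_2$. The case $n_1=n_2$ is exactly \Cref{cor:doubling-regular} with $k=n/2$, since $X_{n/2}=\MB$. You finish with a de Bruijn--Erd\H{o}s-type lemma, which is valid here because $\phi(m)=\psi(dm)$ is increasing with $\sum_j\phi(2^j)/2^j<\infty$ and $0\leq\beta_m\leq dm$.

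What each route buys:
\begin{itemize}
\item Your route makes the paper depend on Huang only through the interpolation inequality it already uses in \Cref{sec:doubling}. The parity issue never arises, because every block has an even number of vertices and is completely paired at the endpoint.
\item The paper's route buys brevity by delegating the near-superadditive limit argument. If you fall back on the bare citation, as your last sentence suggests, the parity of $Nd$ for odd $d$ is exactly the point you would have to address.
\end{itemize}

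One small correction to your sketch of the limit step. \ref{item:H1} compares pairings on a fixed vertex set, so it gives no stability when vertices are added or removed. You do not need such stability. Choose $m_0$ with $\beta_{m_0}/m_0$ near $\limsup_m\beta_m/m$ and with small dyadic tail $\sum_{l\geq1}\phi(2^l m_0)/(2^l m_0)$. Apply the dyadic estimate to each binary digit of $q=\lfloor m/m_0\rfloor$. Then merge the resulting blocks and the remainder $r=m-qm_0$, using at most $\log_2 q+1$ merges at cost at most $\phi(m)$ each, which is $o(m)$ in total. Finally discard the remainder using only $\beta_r\geq0$.
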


\begin{proof}
Huang's theorem \cite[Theorem~3.1]{Huang2018} applies to any sequence of degree lists whose empirical degree distributions converge and whose empirical means converge to the finite mean of the limiting distribution.

To accommodate the parity condition, define a degree list $\dd^{(N)}$ for every $N\geq1$. If $Nd$ is even, let every degree be $d$. If $Nd$ is odd, then both $N$ and $d$ are odd; in this case, give one vertex degree $d+1$ and every other vertex degree $d$. The total degree is always even. Moreover, the empirical degree distribution converges to the point mass at $d$, and the empirical mean converges to $d$. Huang's theorem therefore gives convergence of
\[
  \frac1N\E\MB(\CM_{\dd^{(N)}}).
\]
Along even $N$, the degree list is exactly constant and equal to $d$, so this subsequence is $B_N/N$. This proves \eqref{eq:B-limit}.
\end{proof}

\begin{theorem}\label{thm:configuration}
For every fixed integer $d\geq1$, as $n\to\infty$ through even integers,
\begin{equation}\label{eq:configuration-result}
  C_n-B_n=o(n).
\end{equation}
Moreover,
\[
  \frac{C_n}{n}\longrightarrow b_d,
  \qquad
  \frac{B_n}{n}\longrightarrow b_d.
\]
\end{theorem}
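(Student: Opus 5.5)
The argument is a sandwich between the pointwise bound $\MB\leq\MC$ and the doubling estimate \eqref{eq:C-vs-B2n}. Taking expectations of $\MB(G)\leq\MC(G)$ gives $B_n\leq C_n$ for every even $n$. Dividing by $n$ and using \Cref{lem:b-limit}, we get
\[
  \liminf_{n\to\infty}\frac{C_n}{n}\geq b_d .
\]
The limit is taken along even $n$.

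For the matching upper bound, divide \eqref{eq:C-vs-B2n} by $n$:
\[
  \frac{C_n}{n}\leq \frac{B_{2n}}{2n}+\frac{7}{2}\sqrt{\frac{d\log(1+dn)}{n}}+\frac{r_{n,d}}{n}.
\]
When $n$ is even, $2n$ is also even, so $B_{2n}/(2n)$ is a subsequence of $B_N/N$ along even $N$. By \Cref{lem:b-limit} it converges to $b_d$. Both error terms are $O_d(\sqrt{\log n/n})$: the first by direct inspection, and the second by \Cref{lem:max-over-k}, since $r_{n,d}=O_d(\sqrt{n\log n})$. Hence
\[
  \limsup_{n\to\infty}\frac{C_n}{n}\leq b_d .
\]
Combined with the lower bound, this gives $C_n/n\to b_d$. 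Since also $B_n/n\to b_d$, we obtain $(C_n-B_n)/n\to0$, which is \eqref{eq:configuration-result}.

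There is no genuine obstacle, since all the analytic content is already in \Cref{cor:doubling-regular}, \Cref{lem:max-over-k} and \Cref{lem:b-limit}. Two bookkeeping points need checking. First, \eqref{eq:C-vs-B2n} compares $C_n$ with $B_{2n}$ rather than $B_n$, so the existence of the limit $b_d$, not just a bound, is essential to close the gap. Second, $n$ and $2n$ must both lie in the even-index set on which $B_N$ is defined; they do, because $n$ is even.
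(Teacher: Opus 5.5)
Your proposal is correct and follows essentially the same route as the paper. You sandwich $C_n$ between $B_n$, using the pointwise bound $\MB\leq\MC$, and $\tfrac12 B_{2n}$ plus $O_d(\sqrt{n\log n})$ error terms from \eqref{eq:C-vs-B2n}, and the existence of the limit in \Cref{lem:b-limit} closes both the $\liminf$ and the $\limsup$ at $b_d$.
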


\begin{proof}
Divide \eqref{eq:C-vs-B2n} by $n$. Both error terms are $o(n)$, and \Cref{lem:b-limit} gives
\[
  \limsup_{\substack{n\to\infty\\ n\text{ even}}}\frac{C_n}{n}
  \leq
  \lim_{\substack{n\to\infty\\ n\text{ even}}}\frac{B_{2n}}{2n}
  =b_d.
\]
On the other hand, $\MC(G)\geq\MB(G)$ for every graph, and hence $C_n\geq B_n$. Therefore
\[
  \liminf_{\substack{n\to\infty\\ n\text{ even}}}\frac{C_n}{n}
  \geq
  \lim_{\substack{n\to\infty\\ n\text{ even}}}\frac{B_n}{n}
  =b_d.
\]
It follows that $C_n/n\to b_d$. Together with \eqref{eq:B-limit}, this proves \eqref{eq:configuration-result}.
\end{proof}

\section{Transfer to simple regular graphs}\label{sec:simple}

We finish by conditioning the configuration model on simplicity. Janson's criterion gives a uniform positive lower bound for the simplicity probability at fixed degree. The concentration estimate from \Cref{sec:configuration-proof} then shows that conditioning changes the expected maximum cut and expected maximum bisection by only $O_d(\sqrt n)$.

Let $\mathsf S_{n,d}$ be the event that $\CM_{n,d}$ is simple. Janson's simplicity criterion \cite[Theorem~1.1]{Janson2014} states that, for a sequence of configuration models whose total degrees tend to infinity,
\[
  \liminf\Pp(\text{simple})>0
  \quad\Longleftrightarrow\quad
  \sum_i d_i^2=O\left(\sum_i d_i\right).
\]
For the $d$-regular degree sequence,
\[
  \sum_{i=1}^n d_i^2=nd^2=d\sum_{i=1}^n d_i.
\]
Thus there are constants $c_d>0$ and $n_0(d)$ such that
\begin{equation}\label{eq:simple-prob}
  \Pp(\mathsf S_{n,d})\geq c_d
\end{equation}
for every even $n\geq n_0(d)$.

Conditioned on $\mathsf S_{n,d}$, the configuration model is uniform over the simple $d$-regular graphs on $[n]$. Indeed, every such graph arises from exactly $(d!)^n$ assignments of its incident edges to the labeled half-edges at the vertices. Consequently,
\begin{equation}\label{eq:conditional-law}
  \mathcal L(\CM_{n,d}\mid\mathsf S_{n,d})
  =\mathcal L(G_{n,d}).
\end{equation}

\begin{lemma}\label{lem:conditioning}
For $Z=\MC(\CM_{n,d})$ or $Z=\MB(\CM_{n,d})$,
\begin{equation}\label{eq:conditioning-bound}
  \left|\E[Z\mid\mathsf S_{n,d}]-\E Z\right|
  =O_d(\sqrt n).
\end{equation}
\end{lemma}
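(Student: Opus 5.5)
The plan is to combine the variance bound \eqref{eq:variance} from \Cref{lem:concentration} with the lower bound \eqref{eq:simple-prob} on the simplicity probability, via Cauchy--Schwarz. Both $\MC$ and $\MB$ are of the form $g_{\cF}$ for a predetermined nonempty family: $\cF=2^{[n]}$ for maximum cut, and $\cF=\{S:|S|=n/2\}$ for maximum bisection. Hence \Cref{lem:concentration} applies with $D=dn$ and gives $\Var(Z)\leq8dn$ in both cases.

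Write $\mathsf S=\mathsf S_{n,d}$. The key identity is
\[
  \E[Z\mid\mathsf S]-\E Z
  =\frac{\E\bigl[(Z-\E Z)\1_{\mathsf S}\bigr]}{\Pp(\mathsf S)}.
\]
By Cauchy--Schwarz, the numerator is at most $\sqrt{\Var(Z)}\,\Pp(\mathsf S)^{1/2}$ in absolute value. Therefore
\[
  \bigl|\E[Z\mid\mathsf S]-\E Z\bigr|
  \leq\sqrt{\frac{\Var(Z)}{\Pp(\mathsf S)}}
  \leq\sqrt{\frac{8dn}{c_d}}
\]
for even $n\geq n_0(d)$, using \eqref{eq:simple-prob}. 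This is $O_d(\sqrt n)$.

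The finitely many even $n$ with $d<n<n_0(d)$ contribute only a bounded error. In that range both $\E Z$ and $\E[Z\mid\mathsf S]$ lie in $[0,dn/2]$, since $Z$ is at most the number of edges. Also, $\Pp(\mathsf S)>0$ because simple $d$-regular graphs exist for even $n>d$. The constant in the $O_d$ can therefore be enlarged to absorb these values.

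There is no real obstacle here. The only point to check is that the variance bound in \Cref{lem:concentration} is uniform over predetermined families, and this holds by construction.
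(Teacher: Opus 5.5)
Your proposal is correct and follows the paper's proof essentially verbatim: the variance bound $\Var(Z)\leq 8dn$ from \Cref{lem:concentration}, Cauchy--Schwarz applied to $\E[(Z-\E Z)\1_{\mathsf S_{n,d}}]$, and the lower bound $\Pp(\mathsf S_{n,d})\geq c_d$ combine to give $\sqrt{8dn/c_d}$. Your separate treatment of the finitely many even $n$ with $d<n<n_0(d)$ is not in the paper. It is harmless: the paper's statement is asymptotic, and your remark that $\Pp(\mathsf S_{n,d})>0$ and $0\leq Z\leq dn/2$ there is correct.
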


\begin{proof}
Both $\MC$ and $\MB$ are of the form $g_{\cF}$ for a predetermined family of cuts. Hence \eqref{eq:variance} gives
\[
  \Var(Z)\leq8dn.
\]
Using \eqref{eq:simple-prob} and Cauchy--Schwarz,
\begin{align*}
  \left|\E[Z\mid\mathsf S_{n,d}]-\E Z\right|
  &=\frac{\left|\E\bigl[(Z-\E Z)\1_{\mathsf S_{n,d}}\bigr]\right|}
          {\Pp(\mathsf S_{n,d})}\\
  &\leq
  \frac{\sqrt{\Var(Z)\Pp(\mathsf S_{n,d})}}
       {\Pp(\mathsf S_{n,d})}\\
  &\leq\sqrt{\frac{8dn}{c_d}}.
\end{align*}
This proves \eqref{eq:conditioning-bound}.
\end{proof}

We can now prove the main theorem.

\begin{proof}[Proof of \Cref{thm:main}]
By \eqref{eq:conditional-law} and \Cref{lem:conditioning},
\begin{align*}
  \E\MC(G_{n,d})&=C_n+O_d(\sqrt n),\\
  \E\MB(G_{n,d})&=B_n+O_d(\sqrt n).
\end{align*}
Subtracting and applying \Cref{thm:configuration}, we obtain
\[
  \E\MC(G_{n,d})-\E\MB(G_{n,d})
  =C_n-B_n+O_d(\sqrt n)=o(n).
\]
The convergence of both normalized expectations to $b_d$ follows from \Cref{thm:configuration} and the same conditioning estimates.
\end{proof}

\begin{corollary}
For fixed $d\geq1$,
\[
  \frac{\MC(G_{n,d})-\MB(G_{n,d})}{n}
  \longrightarrow0
\]
in $L^1$ and hence in probability as $n\to\infty$ through even integers $n>d$.
\end{corollary}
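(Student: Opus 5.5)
We aim to show $\E|Y_n|=o(n)$ for $Y_n=\MC(G_{n,d})-\MB(G_{n,d})$. Because $\MB(G)\le\MC(G)$ holds for every graph, $Y_n\ge0$ pointwise. Hence $\E|Y_n|=\E Y_n=\E\MC(G_{n,d})-\E\MB(G_{n,d})$, and \Cref{thm:main} says this is $o(n)$. Dividing by $n$ gives $L^1$ convergence of $Y_n/n$ to $0$.

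Convergence in probability then follows from Markov's inequality. For every $\varepsilon>0$,
\[
  \Pp\bigl(Y_n/n\ge\varepsilon\bigr)\le\frac{\E Y_n}{\varepsilon n}\longrightarrow0 .
\]

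Nothing here is difficult. The one point that needs care is the nonnegativity of $Y_n$, which lets us replace $\E|Y_n|$ by $\E Y_n$; it is exactly the pointwise inequality $\MB\le\MC$ recorded in the introduction. No concentration input is needed beyond what already went into \Cref{thm:main}. If one wants more, \Cref{lem:conditioning} together with \eqref{eq:variance} would upgrade the statement to almost-sure convergence along the sequence, but that is not required here.
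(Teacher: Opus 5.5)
Your argument is correct and is exactly the paper's proof: the pointwise inequality $\MB\le\MC$ turns the expectation estimate of \Cref{thm:main} into $L^1$ convergence, and convergence in probability follows by Markov's inequality. One small correction to your aside, which is not needed for the statement: the variance bound \eqref{eq:variance} only gives Chebyshev tail probabilities of order $1/n$, and these are not summable, so Borel--Cantelli does not apply; an almost-sure (complete) convergence upgrade would instead use the sub-Gaussian tail \eqref{eq:concentration}, transferred to the conditional law by dividing by $c_d$ from \eqref{eq:simple-prob}.
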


\begin{proof}
The difference is nonnegative, and its expectation divided by $n$ tends to zero by \Cref{thm:main}.
\end{proof}

\bibliographystyle{amsplain}
\bibliography{maxcut_maxbis_random_regular_references}

\end{document}